\newcommand{\exn}{{\bf E}}
\newcommand{\pr}{{\bf P}}
\newcommand{\N}{\mathbb{N}}
\newcommand{\R}{\mathbb{R}}
\newcommand{\Ic}{\mathcal{I}}
\newcommand{\Ac}{\mathcal{A}}
\newcommand{\al}{\alpha}
\newcommand{\bs}[1]{\boldsymbol{#1}}
\newcommand{\Be}{{\rm B}}
\newcommand{\de}{\delta}
\newcommand{\deq}{\stackrel{d}{=}}
\newcommand{\Va}{\mbox{\rm Var}\, }
\newcommand{\Co}{\mbox{\rm Cov}\, }
\newcommand{\ind}{{\bf 1}}
\newcommand{\toas}{\stackrel{\footnotesize\mbox{a.s.}}{\longrightarrow}}
\newtheorem{rema}{Remark}
\newtheorem{theo}{Theorem}
\newtheorem{lemo}{Lemma}
\newtheorem{coro}{Corollary}
\title{Gaussian process approximations for multicolor P\'olya urn models}
\author{Konstantin Borovkov$^1$ }
\begin{document}
	\maketitle

\footnotetext[1]{School of Mathematics and Statistics, The University of Melbourne, Parkville 3010, Australia; e-mail: borovkov@unimelb.edu.au.}

\begin{abstract}
Motivated by mathematical tissue growth modelling, we consider the problem of approximating the dynamics of multicolor P\'olya urn processes that start with large numbers of balls of different colors and run for a long time. Using strong approximation theorems for empirical and quantile processes, we establish Gaussian process approximations for the P\'olya urn processes. The approximating processes are sums of a multivariate Brownian motion process and  an independent linear drift with a random Gaussian coefficient. Which of the two terms dominates depends on the ratio of the number of time steps~$n$ to the initial   number  of balls~$N$ in the urn. We also establish an upper bound of the form $c(n^{-1/2}+N^{-1/2})$ for the maximum deviation over the class of convex Borel sets of the step~$n$ urn composition distribution from the approximating normal  law.

		\smallskip
		{\it Key words and phrases:} multicolor P\'olya--Eggenberger urn, strong approximation, Kiefer process, proliferative tissue growth, central limit theorem, convergence rates.
		
		\smallskip
		{\em AMS Subject Classification:} 60F17; 60F15, 60F05, 92C17.
	\end{abstract}

\section{Introduction and main results}

The main motivation for this work came from the  author's conversations with K.A.~Landman concerning certain mathematical tissue growth models she was  developing with her co-authors some  time ago~\cite{BiLaSi08, BiLa09}. One of the purposes of the  models was to better understand the neural crest invasion process, which occurs during embryogenesis and leads to the  formation of the enteric nervous system in the intestine of vertebrates. A major contributing factor to that process is gut growth; of interest is understanding the position and speed of the wavefront with time and tracking individual cells or small groups of cells at different positions in the wave. Idealizing the developing three-dimensional gut tissue as a series of cylindrical shells, the authors of~\cite{BiLaSi08} assumed further that ``the thickening of the
cross-sectional area is small compared to the elongation and
radial expansion, so that the overall growth can be approximated
by the increase in the area of the outer cylindrical
surface. Each cylindrical shell is cut lengthwise, so becoming
a flat rectangular plate with a fixed thickness." The basic model only considers uniaxial growth, the width of the rectangular remaining constant.

The key element of this basic approach   discussed in~\cite{BiLaSi08, BiLa09} is the following continuous time one-dimensional cellular agent model. Suppose we start with $L(0)=N$ agents located at the integer points $1,2,\ldots, L(0)$ on the real line. Some of the agents are marked, and one is interested in tracking the movement  of these marked agents as the tissue grows. The agents are allowed to proliferate, by
mitotic division.   At the end of an agent's life, it splits into two (modeling mitotic division of cells), the daughter agents being inserted in the ``linear tissue" instead of the mother. If the latter was at location $i$ at its division time~$t>0$, then its daughters will be placed  at locations $i$ and $i+1$, while all the agents that prior to that division event were at locations $i+1,i+2, \ldots, L(t-)$ will be  pushed one step to the right, to the respective locations $i+2,i+3, \ldots, L(t)=L(t-)+1$. For the definiteness' sake, if the mother agent was marked, the mark is inherited   by the ``right daughter" (the one at location $i+1$) only.

The lifetimes of the agents are assumed to be independent exponential random variables with a common rate~$\lambda >0$, so that the dynamics of $L(t)$ is that of a pure birth process with the birth rate $\lambda k$ given $L(t)=k.$ This is the well-known Yule process. The dynamics of the process are well-understood: basically, it is exponential growth, see, e.g.,~\cite{de06} and references therein.

Assume that  $d$ of the initial $N$ tissue cells were marked for tracking. Denote  the locations of the marked agents immediately after the $n$th division event  by  $ M_1(n) <M_2 (n) <\cdots <M_d (n) ,$   their initial values being $M_j(0)=N_j,$ $j=1,\ldots , d,$ where  $0=:N_0 <N_1<N_2 <\cdots <N_d <N=:N_{d+1}$, and set for notational convenience $M_0(n):=0,$ $M_{d+1}(n):= N+n.$ As the rates of all the agents' exponential lifetimes are the same, all the currently existing agents are equally likely to become the next cell to divide. Interpreting the
\[
\xi_j(n):=M_j(n)-M_{j-1}(n), \quad j=1,\ldots, d+1,
\]
agents located at the integer points within $(M_{j-1}(n),M_j(n)]$ as ``balls of color $j$" after $n$ steps in an urn model, we see that the dynamics of
\begin{align}
\label{xi}
\{\bs{\xi}(n):= (\xi_1(n),\ldots, \xi_{d+1} (n)): n\ge 0\}
\end{align}
are those     of a simple $(d+1)$-color P\'olya urn, where at each step a ball is extracted at random from the urn  and then returned back with one more ball of the same color. This reduction was noted in~\cite{BiLaSi08} in the case of~$d=1.$ In fact, the reverse ``embedding" of discrete time urn schemes into Markovian continuous time models was used already about fifty years ago in~\cite{AtKa68} (see also Section~4 in~\cite{AtNe72} and a general discussion in~\cite{BlHo91}) and more recently was   successfully revisited and further developed in~\cite{Ja04,Ja06}.

To analyze the cellular agents model, the authors of~\cite{BiLa09} ran simulations, starting with $N=24$ initial agents. The results  showed emergence of bell-shaped distribution curves (quite Gaussian in their shape) for the marked agents' locations after a few tens of divisions.  Then, using the conditional (given the current urn composition) first and second moments of the one-step increments of the number of balls of a given color  in a two-color (i.e., when $d=1$) continuous time P\'olya urn model, the authors  wrote down the Fokker--Planck equation with the same infinitesimal drift and diffusion coefficients. The plots of the densities of the corresponding  diffusion process proved to be in excellent agreement with the simulation results, suggesting that the heuristic approach used   indicated a valid mathematical approximation result.

In the present paper, we establish Gaussian process  approximations to the trajectories  of the locations of  $d\ge 1$ marked agents in the uniaxial growth  cellular agent model, under the most relevant to the above motivation  scenario where the initial number of cells~$N$ is large  (\cite{BiLa09} mentions that in quail, the cell
number in the small plus large intestine is around
800,000 at embryonic age four days and increases   approx.\ five-fold in the next three days). We  measure time in terms of the number of cell divisions  $n\to\infty$  rather than using  the ``physical time"~$t$ (to which transition is quite straightforward). In other words, we   provide approximations to the dynamics of a $(d+1)$-color P\'olya urn model, as both the initial number of balls~$N$ and the number~$n$ of draws tend to infinity. These results are based in the strong approximation established in Theorem~\ref{T1} and are stated in Corollary~\ref{C1} below.
Furthermore, in Theorem~\ref{T2} we establish a uniform  bound for the rate of convergence of the distribution of the time $n$ locations of the~$d$ marked agents (or, equivalently, for  the urn composition after $n$ draws) to the respective normal distribution on~$\R^d.$

There exists vast literature devoted to studying the behavior of what is nowadays called ``P\'olya urn models" and their generalizations. In its basic form, the model was introduced in 1906 in Section~4 of the very first paper on chain dependence by A.A.~Markov~\cite{Ma06}. It was used there  as an example of a sequence of dependent random variables for which the weak law of large numbers did not hold. The standard   reference  being the 1923 paper by F.~Eggenberger and G.~P\'olya~\cite{EgPo23}, the reader is further  referred to~\cite{JoKo77, KoBa97,Ma09} for surveys of results concerning  urn models. We already mentioned papers~\cite{Ja04, Ja06} which present several limit theorems for the numbers of balls in  generalized two-color  P\'olya urn schemes and provide   references to earlier work in that direction, the former paper using  Athreya--Karlin's embedding idea from~\cite{AtKa68}. However, to the best of our knowledge, no  limit theorems were available in the important for us case of large numbers of initial balls.

One of the key classical results for the basic two-color  P\'olya urns  is the a.s.\ convergence (as the number of draws $n$ goes to infinity) of the proportion of the balls of a given color to a beta-distributed random variable (and we note that it was shown in~\cite{GoRe13} that  the rate of the distributional  convergence in Wasserstein metric is $O(n^{-1}),$  with a constant depending on the initial composition of the urn).  A similar convergence result holds for  multi-color urn models as well. For $\bs{\al} := (\al_1 ,\ldots, \al_{d+1}  )\in \N^{d+1}_{>0},$ denote by $\bs{\xi}^{\bs \al} (n)$    the value of the   $(d+1)$-color P\'olya urn process~\eqref{xi} after~$n$ draws, given that it  started  at the initial vector $\bs{\xi}^{\bs \al} (0) =\bs{\al}$ (note that the dimensionality of the superscript gives the number of colors in the model and so completely specifies the latter; for brevity, the superscript $\bs{\al}$ will mostly be omitted in what follows).  Then, for the vector of the proportions of balls of different colors after~$n$ draws, one has
\begin{equation}
\bs{\xi}^{\bs{\al}}  (n)/(N+n)\toas \bs{V}^{\bs{\al}} \quad \mbox{as}\quad n\to\infty,
\label{Conv_to_dir}
\end{equation}
where    $N =\sum_{j=1}^{d+1} \al_j$  and the limiting random vector  $\bs{V}^{\bs{\al}} $ (for which we will also often omit the superscript~$\bs{\al}$ writing just $\bs{V}=(V_1,\ldots,V_{d+1})$) follows the Dirichlet distribution $\mbox{Dir}_{d+1}(\bs{\al})$ with parameters $(\al_1, \ldots,\al_{d+1})$ (see~\cite{BlMa73}; note that the a.s.\ convergence here  is a chrestomathic consequence of the martingale convergence theorem). Introducing notations
\begin{align*}
\bs{x}^\#:= (x_1, \ldots, x_{d})\in \R^{d}
\quad \mbox{for}\quad \bs{x}=(x_1, \ldots, x_d, x_{d+1} )\in \R^{d+1},
\end{align*}
and $B^\#:= \{\bs{x}^\#: \bs{x}\in B\}$ for $B\in \R^{d+1},$ recall that  $\mbox{Dir}_{d+1}(\bs{\al})$ is concentrated on the standard simplex
\[
\Delta^d:=\Bigl\{\bs{x}=(x_1, \ldots, x_{d+1})\in \R^{d+1}_{>0}: \sum_{j=1}^{d+1}x_j=1 \Bigr\}
\]
and is such that the truncated vector $\bs{V}^\#$  has density
\[
\frac1{{\rm B}(\bs{\al})}\prod_{j=1}^{d+1}x_j^{\al_j-1},\quad x_{d+1 }:= 1- \sum_{j=1}^{d}x_j,
\]
on the ``corner''
 \[
\Delta^{d\#} =\Bigl\{\bs{x}=(x_1, \ldots, x_{d})\in \R^{d}_{>0}: \sum_{j=1}^{d}x_j < 1\Bigr\}.
\]
Here
\[
{\rm B}(\bs{z}) :=\frac{\prod_{j=1}^{d+1} \Gamma (z_j)}{ \Gamma (\sum_{j=1}^{d+1}z_j)}, \quad \bs{z}=(z_1,\ldots,z_{d+1})\in\R^{d+1}_{>0},
\]
is the multivariate beta function.

Keeping in mind the motivation for this work (and also for simplicity's sake), we will state our   main results in a form admitting direct interpretation in terms of the dynamics of the markers in the growing tissue. Introduce the following notation for vectors of partial sums: for $\bs{x}=(x_1,\ldots,x_k)\in \R^k$, we set
\begin{align*}
\widehat{\bs{x}}:=(\widehat x_1,\ldots,\widehat x_k), \quad \mbox{where}\quad \widehat x_j=\sum_{i=1}^j x_j, \quad j=1,\ldots, k,
\end{align*}
and let $\widehat{x}_0:=0.$ For $k,m\in \N_{>0}$ such that $2\le k\le m,$ set
\[
\Lambda_m^k := \{\bs{x}\in \N^k_{>0}: \widehat{x}_k=m\},
\quad
\Lambda_m:=\bigcup_{2\le k \le m}\Lambda_m^k,
\quad
\Lambda :=\bigcup_{m \ge 2}\Lambda_m.
\]

Now recall that, in terms of the urn process~\eqref{xi}, the locations of the~$d$ markers after $n\ge 0$ cell divisions are
\begin{align}
\label{N_xi}
M_j (n)= \widehat{\xi}_j (n),\qquad j=1,\ldots , d,
\end{align}
with   $M_j (0)= N_j = \widehat{\al}_j$ being the initial locations of the markers given that the initial numbers of balls of different colors in the  urn are  specified  by the vector $\bs{\al} \in \Lambda^{d+1}_N.$  We set
\[
\bs{M}(n):= (M_1 (n), \ldots, M_{d } (n))\in\N^{d }_{>0}
\]
(recalling that we let  $M_{d+1}(n):=\widehat \xi_{d+1}(n)= N+n,$ $n\ge 0$) and $\bs{N} := (N_1  , \ldots, N_{d }  ).$

Our first assertion is based on the Blackwell--MacQueen theorem~\cite{BlMa73} (closely related to the Hewitt--Savage theorem on exchangeable random variables~\cite{HeSa55}) and the  strong approximation results for empirical and quantile processes. We  will need some further notations. By $\{W^0(y): y\in [0,1]\}$ we will denote the standard Brownian bridge process, i.e., a continuous zero mean Gaussian process with covariance function $\exn  W^0(y_1)W^0(y_2)=y_1\wedge y_2 -y_1 y_2, $ $y_1, y_2\in [0,1],$  and by $\{K(y,t): y\in [0,1], t\ge 0\}$ the ($2$-parameter) Kiefer process, which is a continuous zero mean Gaussian field with covariance function
\begin{align}
\label{Kiefer_cov}
\exn K(y_1,t_1) K(y_1,t_1) = (y_1 \wedge y_2 - y_1 y_2)(t_1\wedge t_2),
\quad y_1, y_2\in [0,1], \quad t_1, t_2\ge 0.
\end{align}
To help one ``visualize" the Kiefer process, note that, for any fixed $y\in [0,1]$, one has the equality in distribution $\{K(y,t):   t\ge 0 \}\deq \{( y(1-y))^{1/2}W(t): t\ge 0\}, $
where $W$ is the standard Wiener process, whereas for any fixed $t>0$ one has
\begin{align*}
\{K(y,t):   y\in [0,1]  \}\deq \{t^{1/2}W^0(y):  y\in [0,1] \}.
\end{align*}

\begin{theo}
	\label{T1}
One can construct all the processes from the family  $\{\bs{\xi}^{\bs\al}: \bs{\al}\in \Lambda\} $  on a common probability space together with a sequence of Brownian bridges $\{ W^{0,N}: N\ge 1\}$ and an independent of that sequence Kiefer process $K$ such that, for   $N>d\ge 1$ and $n\ge 2,$  for the partial sums~\eqref{N_xi} of the components of the processes $\bs{\xi}=\bs{\xi}^{\bs{\al}},$ $\bs{\al}\in \Lambda_N^{d+1},$ one has
\begin{multline}
	\label{Main}
M_j (n) =
  ( N + n)\widehat{\mu}_j 
+\frac{n}{N^{1/2}}  W^{0,N} (\widehat{\mu}_j  ) +\frac{n \ln N}{N }  R_j
\\
  + K\Bigl(\widehat{\mu}_j
  + \frac{1}{N^{1/2}}  W^{0,N} ( \widehat{\mu}_j )
  +\frac{  \ln N}{N } R_j , n\Bigr)+   R_j^* \ln^2 n  ,\quad j=1,\ldots, d,
\end{multline}
where
\[
\bs{\mu} = \bs{\mu}^{ \bs{\al}}:= \bs{\al} / \widehat{\al}_{d+1}=\bs{\al} /N \in\Delta^d
\]
and the remainder terms $R_j=R_j (\bs{\al} )$ and $R_j^*=R_j^* (\bs{\al},n)$ satisfy
\begin{align}
	\label{R1_error}
	\limsup_{N\to\infty}
	\max_{1\le d < N}
	\max_{\bs{\al}\in \Lambda^{d+1}_N}
	\max_{1\le j\le d} |R_j |
\le C  ,
\\
\label{R1*_error}
 \limsup_{n\to \infty}\sup_{N\ge 2}
 \max_{1\le d < N}
 \max_{\bs{\al}\in {\Lambda}^{d+1}_N}
 \max_{1\le j\le d}  |R_j^* | \le C
	\end{align}
a.s.\ for some absolute constant $C<\infty$.
\end{theo}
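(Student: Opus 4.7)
The plan is to exploit two classical representations in sequence. First, by the Blackwell--MacQueen theorem cited in the introduction, the sequence of colours drawn from the P\'olya urn starting with $\bs{\al}$ is exchangeable with de Finetti mixing measure $\mbox{Dir}_{d+1}(\bs{\al})$, so one can realise $\bs{\xi}^{\bs{\al}}$ by first sampling $\bs{V}=\bs{V}^{\bs{\al}}\sim\mbox{Dir}_{d+1}(\bs{\al})$ and then, conditionally on $\bs{V}$ and independently of it, drawing i.i.d.\ $\mbox{Uniform}[0,1]$ variates $U_1,U_2,\ldots,$ the colour at step $i$ being the unique $j$ with $U_i\in(\widehat{V}_{j-1},\widehat{V}_j]$. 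To realise every urn in $\Lambda$ jointly on a common space, I would, for each $N\ge 2$, take an independent sample $\tilde U_1^{(N)},\ldots,\tilde U_{N-1}^{(N)}$ of i.i.d.\ uniforms and set $\widehat{V}_j^{\bs{\al}}:=\tilde U_{(\widehat{\al}_j)}^{(N)}$ for every $\bs{\al}\in\Lambda_N$; the classical spacings argument yields $\bs{V}^{\bs{\al}}\sim\mbox{Dir}_{d+1}(\bs{\al})$ for every such $\bs{\al}$. The families $\{\tilde U^{(N)}\}_{N\ge 2}$ are taken mutually independent and independent of $\{U_i\}_{i\ge 1}$, so under this construction
\[
M_j^{\bs{\al}}(n) = \widehat{\al}_j + \sum_{i=1}^n \ind(U_i\le \widehat{V}_j^{\bs{\al}}).
\]

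Next I would apply two strong approximation results on this probability space. A Cs\"org\H{o}--R\'ev\'esz-type uniform quantile strong approximation delivers, for each $N$, a Brownian bridge $W^{0,N}$ measurable with respect to $\tilde U^{(N)}$ such that
\[
\sup_{1/N\le t\le 1-1/N}\bigl|\tilde U_{(\lceil tN\rceil)}^{(N)} - t - N^{-1/2} W^{0,N}(t)\bigr| \le C N^{-1}\ln N\quad\text{a.s.}
\]
with an absolute $C$. Since the order statistic rank $\widehat{\al}_j$ among $N-1$ uniforms corresponds naturally to $t=\widehat{\al}_j/N=\widehat{\mu}_j$, evaluating the uniform bound at this $t$ produces the identity
\[
\widehat{V}_j^{\bs{\al}} = \widehat{\mu}_j + N^{-1/2}W^{0,N}(\widehat{\mu}_j) + N^{-1} R_j \ln N,
\]
which I take as the \emph{definition} of $R_j$; the uniform approximation then forces~\eqref{R1_error}. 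Independently, the Koml\'os--Major--Tusn\'ady construction for the empirical process of $\{U_i\}$ produces a Kiefer process $K$, independent of $\{W^{0,N}\}_{N\ge 1}$ by construction, with
\[
\sup_{y\in[0,1]}\Bigl|\sum_{i=1}^n(\ind(U_i\le y) - y) - K(y,n)\Bigr| \le C\ln^2 n\quad\text{a.s.}
\]
uniformly in $n$. Because each $\widehat{V}_j^{\bs{\al}}$ is a function of $\{\tilde U^{(N)}\}$ only, and hence independent of $\{U_i\}$ and $K$, the uniform bound may be evaluated at the random point $y=\widehat{V}_j$ to give
\[
M_j(n) - \widehat{\al}_j - n\widehat{V}_j = K(\widehat{V}_j,n) + R_j^*\ln^2 n
\]
with $|R_j^*|\le C$ uniformly, which is~\eqref{R1*_error}. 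Substituting the \emph{exact} identity for $\widehat{V}_j$ into the term $n\widehat{V}_j$, and leaving it as it is inside the first argument of $K(\cdot,n)$, reproduces~\eqref{Main}.

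The main obstacle is to secure $\bs{\al}$-free absolute constants in both strong approximations uniformly over $\Lambda$. For the empirical piece this is automatic: KMT's bound is uniform in $y\in[0,1]$ and in $n$. For the quantile piece, the sharp Cs\"org\H{o}--Cs\"org\H{o}--Horv\'ath estimate is uniform in $t\in[1/N,1-1/N]$, matching the range of $\widehat{\mu}_j$ forced by $\al_j\ge 1$ for every component of $\bs{\al}\in\Lambda_N^{d+1}$, so uniformity in $\bs{\al}$ comes for free; a Chibisov--O'Reilly style weighting would control the behaviour near the endpoints should a finer boundary analysis be needed. The required independence of $K$ from the Brownian bridges is automatic from the two independent uniform samples used to build them.
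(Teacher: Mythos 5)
Your proposal is correct and follows essentially the same route as the paper: the Blackwell--MacQueen conditional i.i.d.\ representation, realising $\bs{V}^{\bs\al}$ through uniform order statistics and applying the Cs\H{o}rg\H{o}--R\'ev\'esz quantile-process strong approximation for the Brownian-bridge term (the paper's Lemma~1), and the KMT/Kiefer-process approximation of the empirical process of the $U_i$ for the remaining term, with the same final substitution into \eqref{Nj}. No substantive differences to report.
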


The first  term on the right-hand side of~\eqref{Main} represents  the mean growth  of the tissue along its longitudinal axis (which is linear  in the ``cell-division counter time"~$n$, but exponential in    ``real time"). The random part of the approximation on the right-hand side of~\eqref{Main} has different dominating terms depending on the relationship between the initial tissue  length~$N$ and the number~$n$ of cell divisions. Corollary~\ref{C1} below  presents our   findings concerning the functional limit theorems for the urn processes and covers the whole spectrum of possible limiting behaviors.
To state it, for $\bs{\al}\in \Lambda_N^{d+1}$ we introduce the $(d\times d)$-matrix
\[
\Sigma^{\bs{\al}}:=
 \bigl(\widehat{\mu}_i  \wedge \widehat{\mu}_j - \widehat{\mu}_i\widehat{\mu}_j: 1\le i,j\le d\bigr),
\quad\mbox{where}\  \bs{\mu} = \bs{\mu}^{ \bs{\al}} .
\]

\begin{coro}
	\label{C1}
One can construct  all the processes from the  family $\{\bs{\xi}^{\bs\al}: \bs{\al}\in \Lambda\} $  on a common probability space together with a family of multivariate  Brownian motion processes $\{\bs{W}^{\bs \al}  : \bs{\al}\in \Lambda \} $ with zero drift and respective covariance matrices $\Sigma^{\bs{\al}}$
and an independent of these Brownian motion processes family of Gaussian random vectors $\{ \bs{Z}^{\bs{\al}} : \bs{\al}\in \Lambda \} $ with zero mean and the same respective  covariance matrices $\Sigma^{\bs{\al}}$ such that,  for the partial sums~\eqref{N_xi} for the processes $\bs{\xi}=\bs{\xi}^{\bs{\al}},$ $\bs{\al}\in \Lambda_N^{d+1},$ one has   the following approximations as $n,N\to\infty :$
\smallskip

{\rm (i)} if $n=o(N)$ then
\[
n^{-1/2} (\bs{M}  \bigl(
\lfloor nt\rfloor) - (N + n t )   \widehat{\bs{\mu}}^\#
\bigr)
=
\bs{W}^{\bs \al} (t) +o_P(1),
\]
where the error term is uniform in $t\in [0,1]$ 
and $\bs{\al}\in \Lambda_{N} ;$
\smallskip

{\rm (ii)} if $n/N\to \nu\in \R_{>0}$ then
\[
n^{-1/2} \bigl(\bs{M}(\lfloor nt\rfloor) -(N + n t)\widehat{\bs{\mu}}^\# \bigr)
 = \nu^{1/2} t\bs{Z}^{\bs \al}+ \bs{W}^{\bs \al} (t) +o_P(1),
\]
where, for any fixed $0<a<b<\infty,$ the error term is uniform in  $t\in [0,1],$   $\bs{\al}\in \Lambda_{N} $ and $\nu\in (a,b);$
\smallskip

{\rm (iii)} if $N=o(n)$ then
\[
N^{1/2}n^{-1}  \bigl(\bs{M}  (
\lfloor nt\rfloor) -(N + n t )   \widehat{\bs{\mu}}^\# \bigr) =  t\bs{Z}^{\bs\al} +  o_P(1),
\]
where the error term is uniform in  $t\in [0,1] $  and  $\bs{\al}\in \Lambda_{N}  .$
\end{coro}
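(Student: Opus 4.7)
The plan is to substitute $n\mapsto \lfloor nt\rfloor$ into the strong approximation~\eqref{Main} of Theorem~\ref{T1} and, under each of the three scalings, extract the surviving Gaussian terms. Using the Kiefer process $K$ and the Brownian bridges $\{W^{0,N}\}$ furnished by Theorem~\ref{T1}, I would first construct the approximating Gaussian objects. Setting
\[
\bs W^{\bs\al}(t) := n^{-1/2}\bigl(K(\widehat\mu_1, nt), \ldots, K(\widehat\mu_d, nt)\bigr),
\]
one obtains a $d$-dimensional Brownian motion with covariance matrix $\Sigma^{\bs\al}$ by Brownian scaling and~\eqref{Kiefer_cov}. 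Since the family $\{W^{0,N}\}$ is independent of $K$, putting
\[
\bs Z^{\bs\al} := \bigl(W^{0,N}(\widehat\mu_1), \ldots, W^{0,N}(\widehat\mu_d)\bigr)
\]
yields a centered Gaussian vector with the same covariance $\Sigma^{\bs\al}$, independent of $\bs W^{\bs\al}$.

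Setting $Y_j:= \widehat\mu_j + N^{-1/2} W^{0,N}(\widehat\mu_j) + N^{-1}(\ln N) R_j$ and substituting $n\mapsto\lfloor nt\rfloor$ in~\eqref{Main}, the quantity to analyze decomposes as
\[
M_j(\lfloor nt\rfloor) - (N+nt)\widehat\mu_j = D_1 + D_2 + D_3 + D_4 + D_5,
\]
where $D_1 := (\lfloor nt\rfloor - nt)\widehat\mu_j = O(1)$, $D_2 := N^{-1/2}\lfloor nt\rfloor\, W^{0,N}(\widehat\mu_j)$, $D_3 := N^{-1}\lfloor nt\rfloor(\ln N) R_j$, $D_4 := K(Y_j, \lfloor nt\rfloor)$, and $D_5 := R_j^*\ln^2\lfloor nt\rfloor$; by~\eqref{R1_error}--\eqref{R1*_error} the remainders $R_j, R_j^*$ are a.s.\ uniformly bounded by an absolute constant. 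In case~(i), rescaling by $n^{-1/2}$: one checks that $D_1/\sqrt n, D_3/\sqrt n, D_5/\sqrt n = o(1)$ and $D_2/\sqrt n = \sqrt{n/N}\cdot t \cdot O_P(1) = o_P(1)$, while $D_4/\sqrt n$ provides the leading term $\bs W^{\bs\al}_j(\lfloor nt\rfloor/n) + o_P(1) = \bs W^{\bs\al}_j(t) + o_P(1)$, the replacement $\lfloor nt\rfloor/n \to t$ being justified by a.s.\ time-continuity of the Brownian motion. In case~(iii), rescaling by $\sqrt N/n$, only $D_2$ survives, giving $(\lfloor nt\rfloor/n) W^{0,N}(\widehat\mu_j) = tZ^{\bs\al}_j + o(1)$, while $D_4\sqrt N/n = O_P(\sqrt{nt})\sqrt N/n = O_P(\sqrt{N/n}) = o_P(1)$ (using the pointwise-in-$y$ Brownian scaling of $K$), and all other terms are similarly $o_P(1)$. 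Case~(ii) is the mixed regime in which both $D_2/\sqrt n \to \sqrt\nu\,t\,Z^{\bs\al}_j$ and $D_4/\sqrt n \to \bs W^{\bs\al}_j(t)$ contribute.

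The substantive technical step underlying all three cases is controlling the $y$-perturbation in $D_4$, namely showing that $|K(Y_j, \lfloor nt\rfloor) - K(\widehat\mu_j, \lfloor nt\rfloor)|$ is negligible after the regime-specific rescaling. Since $Y_j - \widehat\mu_j = O_P(N^{-1/2})$ uniformly in $\bs\al\in\Lambda_N^{d+1}$, the standard modulus-of-continuity estimate for the Kiefer process in the $y$-direction, $\sup_{y,y'\in[0,1],\,|y-y'|\le\delta}|K(y',s)-K(y,s)| = O\bigl(\sqrt{s\delta\ln(1/\delta)}\bigr)$ a.s.\ uniformly on $s\in[0,n]$, yields $|K(Y_j,\lfloor nt\rfloor) - K(\widehat\mu_j, \lfloor nt\rfloor)| = O_P\bigl(\sqrt{n(\ln N)/\sqrt N}\bigr)$; this is $o_P(\sqrt n)$ in all three regimes (and also $o_P(n/\sqrt N)$ in case~(iii) since $N\le n$ there). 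The uniformity claims over $t\in[0,1]$, $\bs\al\in\Lambda_N$, and $\nu$ in any fixed interval $(a,b)\subset\R_{>0}$ then follow because the bounds on $R_j, R_j^*$ are absolute, the constants in the modulus-of-continuity estimate for $K$ are deterministic, and all error bounds depend on $\bs\al$ only through the components $\widehat\mu_j\in(0,1)$, which enter continuously.
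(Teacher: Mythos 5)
Your proposal is correct and follows essentially the same route as the paper: substitute $k=\lfloor nt\rfloor$ into~\eqref{Main}, identify $\bs{W}^{\bs\al}$ with the rescaled Kiefer process at the points $\widehat\mu_j$ and $\bs{Z}^{\bs\al}$ with $(W^{0,N}(\widehat\mu_j))_j$, and kill the $y$-perturbation of the Kiefer term via its modulus of continuity. The only (harmless) difference is that the paper enlarges the perturbation window to $h_n=n^{-1/3}$ before invoking the Cs\H{o}rg\H{o}--R\'ev\'esz increment theorem, so that its hypotheses ($nh_n/\ln n\to\infty$) hold in all regimes, whereas you apply the modulus bound directly at scale $\delta\asymp N^{-1/2}$; both give an error that is $o_P$ of the relevant normalization.
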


Restating the above results in terms of the P\'olya urn processes $\bs{\xi}(n)$ is a straightforward task.

We will now comment on the above assertions. The uniformity of the $o_P(1)$-term is understood in the following sense: if $\bs{R}\in\R^d$ is the remainder term in the respective representation then, for any fixed $\varepsilon >0$, the probabilities $\pr (\max_{1\le j\le d}|R_j|>\varepsilon )$ vanish uniformly over the indicated set of parameter values.

Further, if $n=o(N)$ (part~(i)) then the dominating term on the right-hand side of~\eqref{Main} is the one with the Kiefer process. As the first arguments in~$K$ will be very close to the ratios  $N_j/N=\widehat{\mu}_j,$ it means that, for any initial urn composition  vector  $\bs{\al}\in \Lambda^{d+1}_N,$  the trajectory
\[
n^{-1/2} \bigl(\bs{M} (\lfloor nt\rfloor ) - (N+ \lfloor nt\rfloor ) \widehat{\bs{\mu}}^\#\bigr), \quad t\in [0,1],
\]
is approximated by $\bs{W}^{\bs \al} (t):=(K(\widehat{\mu}_1,t), \ldots, K(\widehat{\mu}_d,t))$, which is a $d$-dimensio\-nal Brownian motion with the specified covariance matrix~$\Sigma^{\bs{\al}}.$ This is so because, at that time scale, the proportions $\xi_j (\lfloor nt\rfloor )/(N+\lfloor nt\rfloor)=(1+o(1)) \al_j  /N$ of   balls of different colors~$j$ in the urn  vary very little    when $t\in [0,1].$ Therefore the dynamics of $\bs{\xi} (n)$ (and hence that of $\bs{M} (n)$) is close to that of a multivariate random walk with i.i.d.\ jumps.

When $n\asymp N$ (part~(ii)) the first two terms on the right-hand side of~\eqref{Main} are of the same magnitude. This is a transitional regime. When $n\gg N$ (part~(iii)) the  Brownian bridge term is the main one. In this case, the (almost Gaussian)  randomness of the limiting Dirichlet-distributed vector~$\bs{V}$ (resulting in a random trend with an almost Gaussian coefficient for the dynamics of~$\bs{M} (\lfloor nt\rfloor)$) dominates the random zero-mean ``Brownian oscillations" as the time values are very large.

Our second main result provides an upper bound for the  convergence rate  in the central limit theorem for the  vector $\bs{M}(n)$ of the marked agents' location after~$n$ steps. For $k\ge 1,$ denote by $\mathscr{C}^k $ the class of all Borel convex subsets of~$\R^k$. For a non-negative definite symmetric matrix $\Sigma\in\R^{k\times k},$ we denote by $\Phi_\Sigma$   the zero-mean Gaussian distribution on $\R^k$ with covariance matrix~$\Sigma.$

\begin{theo}\label{T2}
Let $d\in \N_{>0}$ and $\delta \in (0,1/(d+1))$ be  fixed numbers.
For any $N>d, $ $n\ge 1 $ and $\bs{\al}\in\Lambda_N^{d+1}$ such that $\al_j\ge \delta N,$ $j=1,\ldots, d+1,$    one has
\[
\sup_{A\in \mathscr{C}^{d}}
 \biggl|\pr \biggl(\frac{\bs{M} (n) - (N+n)\widehat{\bs{\mu}}^\# }{((N+n) n/N)^{1/2} }\in A\biggr)  -\Phi_{ \Sigma^{\bs\al}} (A)\biggr|
 \le c(n^{-1/2}+N^{-1/2} ).
\]
\end{theo}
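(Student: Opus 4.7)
The strategy rests on the classical Blackwell--MacQueen representation~\cite{BlMa73}: on an enlarged probability space one has $\bs{\xi}(n) = \bs{\al} + \bs Y_n$, where $\bs V \sim \mbox{Dir}_{d+1}(\bs\al)$ and, conditionally on $\bs V$, $\bs Y_n$ is a multinomial$(n,\bs V)$ vector. Taking partial sums gives the exact identity
\[
\bs M(n) - (N+n)\widehat{\bs\mu}^\# = (\widehat{\bs Y}_n^\# - n\widehat{\bs V}^\#) + n(\widehat{\bs V}^\# - \widehat{\bs\mu}^\#),
\]
and, dividing by $((N+n)n/N)^{1/2}$, the normalised vector takes the form $\bs X = a\bs U_n + b\bs Z$ with $a := (N/(N+n))^{1/2}$, $b := (n/(N+n))^{1/2}$ (so $a^2+b^2=1$), $\bs U_n := n^{-1/2}(\widehat{\bs Y}_n^\# - n\widehat{\bs V}^\#)$ and $\bs Z := N^{1/2}(\widehat{\bs V}^\# - \widehat{\bs\mu}^\#)$. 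Conditionally on $\bs V$, $\bs U_n$ has covariance $\Sigma^{\bs V}$; unconditionally, $\bs Z$ has covariance $\tfrac{N}{N+1}\Sigma^{\bs\al}$. The plan is to show, uniformly over convex sets, that $\bs U_n$ is close to $\Nc(0,\Sigma^{\bs V})$ with Berry--Esseen rate $n^{-1/2}$ and that $\bs Z$ is close to $\Nc(0,\Sigma^{\bs\al})$ with rate $N^{-1/2}$, then to glue these together by couplings.

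The core of the argument is a pair of multivariate Berry--Esseen estimates of Bhattacharya--Ranga~Rao type for convex sets in $\R^d$. For the multinomial factor, conditionally on $\bs V$, $\widehat{\bs Y}_n^\#$ is a linear image of a sum of $n$ i.i.d.\ bounded random vectors, so
\[
\sup_{A\in\mathscr{C}^d}\bigl|\pr(\bs U_n\in A\mid\bs V)-\Phi_{\Sigma^{\bs V}}(A)\bigr| \le C(\bs V)\,n^{-1/2},
\]
where $C(\bs V)$ is controlled by the condition number of $\Sigma^{\bs V}$, which stays bounded outside a set of probability $e^{-cN}$ thanks to $\al_j\ge\delta N$ and standard Dirichlet concentration. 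For the Dirichlet factor, I use the representation $\bs V = \bs G/\widehat G_{d+1}$ with independent $G_j\sim\mbox{Gamma}(\al_j,1)$, so $\bs G-\bs\al$ is a sum of $N$ i.i.d.\ centred unit exponentials. A first-order Taylor expansion of $\widehat{\bs V}^\# = \widehat{\bs G}^\#/\widehat G_{d+1}$ around $(\widehat{\bs\al}^\#,N)$ gives
\[
\bs Z = N^{-1/2}\bigl[\widehat{\bs G}^\# - \widehat{\bs\al}^\# - \widehat{\bs\mu}^\#(\widehat G_{d+1}-N)\bigr] + O_P(N^{-1/2}),
\]
whose leading term is a linear combination of those $N$ exponentials. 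Bhattacharya--Ranga~Rao applied to that linear sum, combined with truncation on the exponentially likely event $\{\widehat G_{d+1}\in[\tfrac12 N,\tfrac32 N]\}$ to absorb the quadratic remainder, yields $\sup_A|\pr(\bs Z\in A)-\Phi_{\Sigma^{\bs\al}}(A)|\le CN^{-1/2}$.

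The two estimates are combined by a chain of three couplings. First, replace $\bs U_n$ by $\bs U^*$ with $\bs U^*\mid\bs V\sim\Nc(0,\Sigma^{\bs V})$: since preimages of convex sets under the affine map $\bs u\mapsto a\bs u+b\bs Z$ remain convex, the multinomial bound applied conditionally on $\bs V$ and then averaged gives an error $O(n^{-1/2})$. Second, swap $\Sigma^{\bs V}$ for $\Sigma^{\bs\al}$ in that Gaussian: since $\exn\|\Sigma^{\bs V}-\Sigma^{\bs\al}\|=O(N^{-1/2})$ and $\lambda_{\min}(\Sigma^{\bs\al})$ is bounded below by $c(\delta)>0$ (as $\det\Sigma^{\bs\al}=\prod_j\mu_j\ge\delta^{d+1}$), the conditional total-variation distance between $\Nc(\cdot,a^2\Sigma^{\bs V})$ and $\Nc(\cdot,a^2\Sigma^{\bs\al})$ has expectation $O(N^{-1/2})$, and the new vector $\bs U^{**}\sim\Nc(0,\Sigma^{\bs\al})$ may then be taken independent of $\bs V$. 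Third, apply the Dirichlet bound to replace $\bs Z$ by an independent $\bs Z^*\sim\Nc(0,\Sigma^{\bs\al})$, using convexity of $(A-a\bs U^{**})/b$ to transfer the bound after conditioning on $\bs U^{**}$. The resulting vector $a\bs U^{**}+b\bs Z^*$ has distribution $\Phi_{\Sigma^{\bs\al}}$ exactly, and summing the three errors gives the claimed bound.

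The main technical obstacle is the Dirichlet Berry--Esseen estimate: transferring a convex-set bound from the linear i.i.d.\ sum $(\widehat{\bs G}^\#,\widehat G_{d+1})\in\R^{d+1}$ to its nonlinear quotient image $\widehat{\bs V}^\#\in\R^d$ requires controlling both the quadratic Taylor remainder uniformly in the second argument and the possibility of the denominator $\widehat G_{d+1}$ being close to zero, where the map degenerates. The hypothesis $\al_j\ge\delta N$ (with exponential concentration of sums of independent exponentials) pins $\widehat G_{d+1}$ to a neighbourhood of $N$ with probability $1-e^{-cN}$, where the map is smooth with bounded derivatives; outside that neighbourhood the contribution is far smaller than $N^{-1/2}$ and can be absorbed into the error.
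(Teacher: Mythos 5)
Your proposal follows essentially the same route as the paper: condition on the Dirichlet vector $\bs{V}$ via Blackwell--MacQueen so that the urn increments become conditionally i.i.d., apply a multivariate Berry--Esseen bound over convex sets to that conditional sum (rate $n^{-1/2}$), approximate the Dirichlet fluctuation by a Gaussian with rate $N^{-1/2}$, swap $\Sigma^{\bs V}$ for $\Sigma^{\bs\al}$ at cost $O(N^{-1/2})$, and finish with the convolution identity $a^2\Sigma^{\bs\al}+b^2\Sigma^{\bs\al}=\Sigma^{\bs\al}$. Your exact decomposition $\bs{X}=a\bs{U}_n+b\bs{Z}$ with $a^2+b^2=1$ is a clean way of packaging what the paper writes as an integral over the Dirichlet law, and your handling of the covariance swap (via a TV bound between Gaussians, using $\lambda_{\min}(\Sigma^{\bs\al})\ge c(\delta)$ and $\exn\|\Sigma^{\bs V}-\Sigma^{\bs\al}\|=O(N^{-1/2})$) matches the paper's use of the Devroye--Mehrabian--Reddad bound. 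One cosmetic difference: the paper controls the Berry--Esseen constant $\|\Sigma_{\bs V^{\#}}^{-1}\|$ by computing $\exn V_j^{-1}\le 2/\delta$ directly, rather than truncating on an exponentially unlikely event; both work.

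The one component where you genuinely diverge is the $N^{-1/2}$ Gaussian approximation for $\bs{Z}=N^{1/2}(\widehat{\bs V}^{\#}-\widehat{\bs\mu}^{\#})$. The paper gets this for free by identifying $\widehat{\bs V}^{\#}$ with a vector of uniform order statistics and citing Matsunawa's total variation bound for joint sample quantiles, which is strictly stronger than a convex-set bound. You instead propose the Gamma-quotient representation plus a first-order Taylor expansion, and this is the soft spot of your sketch: absorbing an $O_P(N^{-1/2})$ quadratic remainder into a convex-set bound requires balancing $\pr(|R|>\varepsilon)$ against the Gaussian measure of $\varepsilon$-neighbourhoods of convex boundaries, and with the sub-exponential tails of the remainder the straightforward optimization yields $O(N^{-1/2}\log N)$, not $O(N^{-1/2})$. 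Recovering the clean rate by this route needs the Bhattacharya--Ghosh machinery for smooth functions of sample means (or the observation that the preimage of a convex set under the quotient map is itself a convex cone, which lets you avoid linearizing on the probability side and confine the Taylor step to the Gaussian comparison). So the architecture is sound and the conclusion is reachable, but as written this step either loses a logarithm or requires a substantially heavier lemma than the two lines you allot to it.
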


Here and in what follows, by the letter $c$ (possibly with a subscript) we denote constants that may depend on $\delta$ and $d$ only and may be different even within one and the same formula.

Observe that the  scaling used in the statement of Theorem~\ref{T2} is universal: it works in all the cases  (i)--(iii) in Corollary~\ref{C1}

\begin{rema}\label{Rem_1}
	\rm
Note that one can   state the assertion of Theorem~\ref{T2} in an  equivalent from in terms of the P\'olya urn composition vectors as well. For $\bs{\xi} = \bs{\xi}^{\bs{\al}},$ $\bs{\mu} = \bs{\mu}^{\bs{\al}},$ set
\begin{align}
\label{xi-forma_0}
\bs{\Xi}(n):=  n^{-1/2}(\bs{\xi}^\#(n)- (N+ n)\bs{\mu}^\#), \quad n\ge 1,
\end{align}
and, for $\bs{x}\in\R^k,$ introduce matrices
\begin{align}
\label{Sigma_x}
\Sigma_{\bs x}:=\mbox{diag} (\bs{x}) - \bs{x}^\top \bs{x}\in  \R^{k\times k} ,
\end{align}
where $\mbox{diag}\, (\bs{x})$ is the diagonal matrix with   diagonal entries $x_j,$ $j=1,\ldots, k,$ and~$^\top$ denotes transposition. Then, under the assumptions of our Theorem~\ref{T2}, one equivalently has
\[
\sup_{A\in \mathscr{C}^{d}}\bigl|\pr (\bs{\Xi} (n)\in A) |-\Phi_{(1+n/N)\Sigma_{\bs\mu^{\scriptscriptstyle\#}}} (A)\bigr|
\le c(n^{-1/2}+N^{-1/2} ).
\]
It will actually be more convenient for us to prove this latter bound.
\end{rema}

\section{Proofs}

\begin{proof}[Proof of Theorem~\ref{T1}] Consider a P\'olya urn process $\bs{\xi}=\bs{\xi}^{\bs{\al}},$ $\bs{\al}\in \Lambda_N^{d+1},$   $N>d\ge 1.$ It is well known that the indicator random vectors $\bs{\eta}(n):= \bs{\xi}(n)- \bs{\xi}(n-1),$ $n \ge 1,$ are exchangeable. By the main theorem in~\cite{BlMa73}, given the random vector~$\bs{V}=\bs{V}^{\bs{\al}}$ from~\eqref{Conv_to_dir}, the   vectors $\bs{\eta}(n) $ are conditionally i.i.d.,
	\begin{align}
	\label{PrV}
	\pr (\bs{\eta}(1)= \bs{w}(j)|\bs{V})= V_j, \quad j=1,\ldots, d+1,
	\end{align}
where $\bs{w}(j)=(\de_{j1}, \de_{j2},\ldots, \de_{j,d+1})$ are the respective unit coordinate vectors in~$\R^{d+1}$ (here $\de_{jk}$ is the Kronecker delta). Therefore, without loss of generality, one can assume that the sequence $\{\bs{\eta}(n):n\ge 1\}$ is given as follows.  For  $\bs{v}\in \Delta^d$ and  $u\in [0,1],$ introduce the vector-valued function  $\bs{g} (\bs{v},u)= (g_1 (\bs{v},u), \ldots, g_{d+1} (\bs{v},u))$ as
\begin{align}
\label{g}
\bs{g} (\bs{v},u):=\sum_{j=1}^{d+1}\ind (\widehat{v}_{j-1}< u\le \widehat{v}_{j })\bs{w}(j).
	\end{align}
Assuming that  $\{U_n:n\ge 1\}$
is a sequence of i.i.d.\ $(0,1)$-uniform random variables, independent of a given random vector $\bs{V}\sim {\rm Dir}_{d+1} (\bs{\al})$, we set
\[
\bs{\eta}(n):= \bs{g} (\bs{V}, U_n), \quad n\ge 1.
\]
This sequence clearly satisfies~\eqref{PrV} and is conditionally i.i.d.\ given the value of~$\bs{V}.$

Now, recalling that  $\bs{\mu}  = \exn \bs{V}= \bs{\al}/N,$ so that $\widehat \mu_j=N_j/N,$ one has
\begin{align}
\label{Nj}
M_j (n)   &  = N_j + \sum_{k=1}^n (\widehat\xi_j (n) - \widehat{\xi}_j(n-1))
\notag
\\
&
= N_j+ n \sum_{i=1}^j V_i + \sum_{i=1}^j \sum_{k=1}^n (g_i (\bs{V}, U_k) - V_i)
\notag
\\
&
= N_j + n \widehat{\mu}_j +   n \sum_{i=1}^j (V_i- \mu_i)
+  \sum_{k=1}^n \sum_{i=1}^j (g_i (\bs{V}, U_k) - V_i)
\notag
\\
&
=  (N  + n) \widehat{\mu}_j    + n  (\widehat{V}_j -\widehat{\mu}_j)
+  \sum_{k=1}^n  \bigl( \ind (U_k\le  \widehat{V}_j) - \widehat{V}_j\bigr).
\end{align}

The second term in the last line of~\eqref{Nj} can be approximated using the following lemma.

\begin{lemo}
	\label{L1}
	Let $ \{\bs{X}^{\bs{\al}}: \bs{\al}\in \Lambda  \}$ be a family of random vectors such that  $\bs{X}^{\bs{\al}}\sim  {\rm Dir}_{k } (\bs{\al })$ for $\bs{\al}\in  \Lambda_m^k,$ $m\ge k\ge 2$. One can construct random vectors from  that family   on a common probability space together with a sequence of Brownian bridges $\{ W^{0,m}: m\ge 1\}$ such that
	\begin{align}
	\label{StrAppr}
	\widehat{X}_j^{\bs{\al}}= \widehat{\al}_j /m  + m^{-1/2}  W^{0,m} (\widehat{\al}_j  /m) + R   ( m, \widehat{\al}_j  ), \quad j=1,\ldots, k,
	\quad \bs{\al}\in \Lambda_m^k,
	\end{align}
	where
	\begin{align}
	\label{StrApprError}
	\limsup_{m\to\infty}  \frac{m }{\ln m}
	\max_{2\le k \le m}\max_{\bs{\al}\in \Lambda^k_m}
	\max_{1\le j\le k} |R (m, \widehat{\al}_j )|\le C\quad  a.s.
	\end{align}
	for some absolute constant $C<\infty$.
\end{lemo}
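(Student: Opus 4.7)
The plan is to realize the whole family $\{\bs{X}^{\bs\al}:\bs\al\in\Lambda\}$ via grouped uniform spacings and then import the rate from the Komlós--Major--Tusnády (KMT) strong approximation of the uniform quantile process. For each fixed $m\ge 2$, take i.i.d.\ uniform $(0,1)$ random variables $U^{(m)}_1,\ldots,U^{(m)}_{m-1}$, with the sequences for different $m$ mutually independent, and let $0=:U^{(m)}_{(0)}<U^{(m)}_{(1)}<\cdots<U^{(m)}_{(m-1)}<U^{(m)}_{(m)}:=1$ denote the corresponding order statistics. A classical fact about uniform spacings states that, for any $\bs\al\in\Lambda^k_m$, the grouped spacings $X^{\bs\al}_j := U^{(m)}_{(\widehat{\al}_j)}-U^{(m)}_{(\widehat{\al}_{j-1})}$, $j=1,\ldots,k$, have the $\mbox{Dir}_k(\bs\al)$ distribution. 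Taking this as the definition of $\bs X^{\bs\al}$, so that $\widehat{X}^{\bs\al}_j=U^{(m)}_{(\widehat{\al}_j)}$, simultaneously realizes all members of $\{\bs X^{\bs\al}:\bs\al\in\Lambda_m\}$ on one probability space for each~$m$, and independence across $m$ handles the whole family.

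I would then invoke the Csörgő--Révész type KMT approximation of the uniform quantile process: on a further enlargement of the space, one can construct Brownian bridges $W^{0,m}$, taken independent across $m$ and independent of the $U^{(m)}$-sequences, such that for absolute constants $c_1,c_2,c_3>0$ and all $x\ge 0$,
\[
\pr\biggl(\sup_{u\in [0,1]}\Bigl|\sqrt{m-1}\bigl(U^{(m)}_{(\lceil (m-1)u\rceil)}-u\bigr)+W^{0,m}(u)\Bigr|>\frac{c_1\ln m+x}{\sqrt{m}}\biggr)\le c_2 e^{-c_3 x}.
\]
Choosing $x$ a large multiple of $\ln m$ and invoking Borel--Cantelli in $m$ gives, a.s.\ for all sufficiently large~$m$, the uniform-in-$u$ bound $O(\ln m/\sqrt m)$ on the left-hand side. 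Evaluating at $u=\widehat{\al}_j/(m-1)$ so that $\lceil (m-1)u\rceil=\widehat{\al}_j$, and flipping the sign of the bridge to match the sign convention of \eqref{StrAppr}, yields the representation \eqref{StrAppr} with $|R(m,\widehat{\al}_j)|=O(\ln m/m)$, after reconciling the renormalization from $\widehat{\al}_j/(m-1)$ and $\sqrt{m-1}$ to $\widehat{\al}_j/m$ and $\sqrt{m}$: the denominator shift contributes $O(1/m)$ deterministically, while the modulus-of-continuity adjustment $W^{0,m}(\widehat{\al}_j/(m-1))\mapsto W^{0,m}(\widehat{\al}_j/m)$ over an interval of length $O(1/m)$ contributes only $O(m^{-1}(\ln m)^{1/2})$ a.s., all comfortably absorbed in the target $O(\ln m/m)$ error.

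The uniformity \eqref{StrApprError} over $k$, $\bs\al\in\Lambda^k_m$, and $j$ is then automatic: for each fixed $m$ the approximation bound is uniform over all $u\in[0,1]$, and every relevant evaluation point $\widehat{\al}_j/m$ lies in the grid $\{0,1/m,\ldots,1\}\subset[0,1]$. The main obstacle is really the first step here, namely quoting (or re-deriving) the KMT quantile approximation in exactly the uniform-in-$u$, a.s.\ logarithmic-rate form above; once that deep result is in hand the rest is bookkeeping, verifying that the two deterministic renormalizations and the Brownian bridge modulus adjustments at scale $1/m$ all remain below the $O(\ln m/m)$ threshold, which they do with room to spare.
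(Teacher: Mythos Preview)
Your approach is essentially the paper's: realize all the Dirichlet vectors simultaneously via order statistics of $m-1$ i.i.d.\ uniforms (grouped spacings), then import the $O((\ln m)/m)$ rate from the Cs\H{o}rg\"o--R\'ev\'esz/KMT strong approximation of the uniform quantile process, with Borel--Cantelli over~$m$. The paper evaluates the quantile process at $\widehat{\al}_j/m$ and absorbs the resulting $(1-1/m)^{-1/2}$ scaling factor via the bound~\eqref{BC_bound}, whereas you evaluate at $\widehat{\al}_j/(m-1)$ and then shift the bridge argument by a modulus-of-continuity estimate; these are equivalent bookkeeping variants.

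One phrase needs fixing: you write that the bridges $W^{0,m}$ can be taken ``independent of the $U^{(m)}$-sequences.'' They cannot. The KMT/Cs\H{o}rg\"o--R\'ev\'esz construction \emph{couples} the bridge to the uniform sample on an enlarged space; the a.s.\ bound
\[
\sup_{u\in[0,1]}\Bigl|\sqrt{m-1}\,\bigl(U^{(m)}_{(\lceil(m-1)u\rceil)}-u\bigr)+W^{0,m}(u)\Bigr|=O\bigl((\ln m)/\sqrt{m}\,\bigr)
\]
would be impossible for a bridge independent of the sample (the left-hand side would then be of order~$1$). Drop that clause and the argument is fine; what you actually need---and do have by construction---is independence of the coupled pairs $\bigl((U^{(m)}_l)_{l},\,W^{0,m}\bigr)$ across different values of~$m$.
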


Clearly, from~\eqref{StrAppr} one also has the following representation for the original Dirichlet-distributed random vectors: for $j=1,\ldots, k,$
\[
{X}_j^{\bs{\al}}=  {\al}_j /m  + m^{-1/2} (  W^{0,m} (\widehat{\al}_j  /m)- W^{0,m} (\widehat{\al}_{j-1}  /m)) + R _j ( m, \widehat{\al}_j  )-R _j ( m, \widehat{\al}_{j-1}  ).
\]

\begin{proof}[Proof of Lemma~\ref{L1}]
Let $\{U'_l: l\ge 1\}$ be a sequence of i.i.d.\ $(0,1)$-uniform random variables. Fix   $m\ge k\ge  2$ for the moment and denote by $U'_{1:m-1}\le U'_{2:m-1}\le\cdots \le U'_{m-1:m-1}$ the order statistics for the sample $\{U'_l: 1\le  l< m \}.$ Set   $U'_{0:m-1}:=0,$  $U'_{m:m-1}:=1.$ Due to the well-known fact that, for any $\bs{\al}\in \Lambda^k_m,$ one has (see, e.g.,~\cite{BaRa98})
\begin{align}
\label{order_Dir}
(U'_{ \widehat \al_1:m-1}-U'_{ \widehat \al_0 :m-1},  U'_{ \widehat \al_2:m-1}- U'_{\widehat \al_1:m-1 },\ldots, U'_{\widehat\al_k:m-1}- U'_{\widehat\al_{k-1}:m-1})   \sim  {\rm Dir}_{k } (\bs{\al }),
\end{align}
one can assume without loss of generality that
$\widehat{\bs{X}}{}^{\bs{\al}} = (U'_{\widehat\al_1:m-1},\ldots,  U'_{\widehat\al_k:m-1}) $ for all $\bs{\al}\in \Lambda^k_m$ (recall also that $\widehat \al_k=m$ for such~$\bs{\al}$).

Next recall that the quantile process for our uniform sample $\{U'_l: 1\le  l< m \}$ is defined~\cite{CsRe78} as
\[
q_{m-1}(u):= (m-1)^{1/2} \biggl( \sum_{i=1}^{m-1} U'_{i:m-1}\ind \Bigl( \frac{i-1}{m-1} <u\le\frac{i }{m-1}  \Bigr)-u\biggr),
\quad u\in [0,1].
\]
It is easily seen  that 
\begin{align}
\label{Xq}
m^{1/2}(\widehat{X}_j^{\bs{\al}}- \widehat{\al}_j /m)
 & =
 (1-m^{-1})^{-1/2}  q_{m-1}(\widehat{\al}_j/ m) , \quad j=1, \ldots, k-1
\end{align}
(note that for $j=k$ the value of the  quantity on the left-hand side of the above formula  is~$0$). Now the desired assertion~\eqref{StrAppr}, \eqref{StrApprError}   immediately  follows from the   strong approximation theorem  for uniform quantile processes $q_{m-1}$ (see, e.g., Theorem~B and  Remark~1 in~\cite{CsRe78}) and the observation that the effect of the factor $(1-m^{-1})^{-1}$ in~\eqref{Xq} is negligible. The latter fact is a consequence of the standard exponential bound  for   the Brownian bridge process:
\begin{align}
\label{BC_bound}
\pr \Bigl(\max_{y\in [0,1]}|W^{0}(y) |>x \Bigr) \le 2 e^{-2 x^2},\quad x>0
\end{align}
(see, e.g., Section~9 in~\cite{Bi99}). Indeed, combined with the Borel--Cantelli lemma, this bound implies that, for any sequence $\{W^{0,m}: m\ge 1\}$ of Brownian bridges on a common probability space, one has
\begin{align}
\label{BC_BB}
\sum_{m=1}^\infty \ind \Bigl(\max_{y\in [0,1]}|W^{0,m}(y) |> \ln ^{1/2} m\Bigr) <\infty\quad \rm a.s.
\end{align}
This, in turn, implies that the additional additive approximation error caused by the above-mentioned factor in~\eqref{Xq}  is a.s.\ $O(m^{-1}\ln ^{1/2} m).$ Lemma~\ref{L1} is proved.
\end{proof}

\begin{rema}
\rm In the case of a fixed $k$ and a sequence  $\bs{\al}(m)\in \Lambda^k_m,$ $m\ge k,$  satisfying the condition   $\max_{1\le j\le k-1}m^{-1/2} |\widehat \al_j (m) - r_j m|\to 0$ for   fixed $0<r_1< \cdots< r_{k-1}<1,$ the asymptotic normality of $ \widehat{\bs{X}}{}^{\bs{\al}(m)}$ as $m\to \infty$    follows from Theorem~2 in~\cite{Wa68}.  The case of growing $k=k(m)\le m$     was considered under the condition that $k(m)/\min_{1\le j\le k(m)}\al_j(m)\to 0 $ as $m\to\infty$  in~\cite{IkMa72}. It was proved in Theorem~3.1 of that paper  that, for a sequence  $\bs{\al}(m)\in \Lambda^{k(m)}_m,$ $m\ge 2,$ the total variation distance between the distribution of $\widehat{\bs{X}}{}^{\bs{\al}(m)}$ and the normal distribution in~$\R^{k(m)}$ with the matching  mean vector and covariance matrix tends to zero as $m\to\infty.$ Upper bounds for the rate of this convergence were obtained in~\cite{Ma75}. The strong approximation result stated in our Lemma~\ref{L1} shows that the asymptotic normality holds uniformly and without any additional assumptions on~$\bs{\al}.$
\end{rema}

Return to the proof of Theorem~\ref{T1}. As $\bs{V}\sim \mbox{Dir}_{d+1}(\bs{\al}),$ by Lemma~\ref{L1} (where we choose $m=N,$  $k=d+1 $), for a suitably constructed family $\{\bs{V}= \bs{V}^{\bs{\al}}: \bs{\al}\in \bs{\Lambda}\}$ and sequence of Brownian bridges $\{W^{0,N}:N\ge 1\},$ as $N\to\infty$ one has
	\begin{align}
	\label{V-mu}
 \widehat{V}_j -\widehat{\mu}_j = N^{-1/2} W^{0,N} (\widehat{\mu}_j)
  + O (N^{-1 }\ln N)\quad \rm a.s.,
	\end{align}
where the $O$-term is understood in the sense of~\eqref{StrApprError}.

Further, denoting by $F_n^*$ the empirical distribution function for the sample $\{U_l: 1\le l \le n\},$  we see that
the last term on the right-hand side of~\eqref{Nj} is  equal to $ \phi_n (\widehat{V}_j),$ where
\[
\phi_n (y) :=n ( F_n^* (y )- y),\quad y\in [0,1].
\]
By the Koml\'os--Major--Tusn\'ady theorem~\cite{KoMaTu75} (see also Theorem~A and Remark~1 in~\cite{CsRe78}), one can construct the sequence $\{ U_n\}$ on a common probability space with a Kiefer process $K$ such that
	\begin{align}
	\label{Z_n}
\phi_n (y) = K(y,n) + R^*(y,n), \quad y\in [0,1], \ n\ge 1,
	\end{align}
where for the remainder term $R^*$ one has
\[
\limsup_{n\to \infty}\sup_{y\in [0,1]} \frac{|R^*(y,n)|}{\ln^2 n} < C^* \quad \rm  a.s.
\]
for some absolute constant $C^*<\infty.$ Combining  now~\eqref{Nj},  \eqref{V-mu} and~\eqref{Z_n} (where we substitute the expression for~$\widehat{V}_j $ from $\eqref{V-mu}$) completes the proof of Theorem~\ref{T1}.
\end{proof}

\begin{proof}[Proof of Corollary~\ref{C1}]   It follows from~\eqref{Main} that, for $k=1,\ldots, n,$ one has
\begin{multline}
n^{-1/2} (M_j (k) - (N+ k)\widehat{\mu}_j)
   =
 \frac{k}{n}\Bigl(\frac{n}N\Bigr)^{1/2} W^{0,N} (\widehat\mu_j)
 +R_j kn^{-1/2}N^{-1}\ln N
 \\
   +
 n^{-1/2}K \bigl(\widehat{\mu}_j +\theta_{\widehat{\mu}_j,N}  , k \bigr)
   + R_j^* n^{-1/2}\ln^2 k,
\label{C1_rep}
\end{multline}
where, in view of~\eqref{BC_BB} and~\eqref{R1_error}, with probability~1,   one has
\begin{align}
\label{theta}
\max_{1\le d <N}
\max_{\bs{\al}\in \Lambda_N^{d+1}}
 \max_{1\le j\le  d}|\theta_{\widehat{\mu}_j,N}|\le 2 N^{-1/2}\ln^{1/2}N \quad \mbox{ for all sufficiently large~$N.$}
\end{align}

(i)  If $n=o(N)$ then the first term on the right-hand side of~\eqref{C1_rep} is clearly $ o_P(1)$ in view of~\eqref{BC_bound}.   The second and forth terms are vanishing a.s.\ in view of the bounds~\eqref{R1_error} and~\eqref{R1*_error}.

For the third term on the right-hand side of~\eqref{C1_rep}, setting $h_n:= n^{-1/3}$ and observing that $ N^{-1/2}\ln^{1/2}N =o(h_n),$ we see from~\eqref{theta} that a.s.\  for all sufficiently large $N$ one  has, for all $k\ge 1,$
\begin{align}
\label{Kmod}
|K(\widehat{\mu}_j + \theta_{\widehat{\mu}_j,N}, k)
 - K (\widehat{\mu}_j,k ) |
 \le
 \sup_{0\le u<v\le u+h_n \le 1}|K(u, k)
 - K (v,k ) |.
\end{align}
Set $\beta_n:=(2n h_n |\ln h_n|)^{-1/2}= n^{-1/3}\bigl(\frac23 \ln n\bigr)^{-1/2}.$ Since  clearly $|\ln h_n| \gg \ln\ln n$ as $n\to\infty,$ by Theorem~1.15.2 in~\cite{CsRe81} we have
\[
\lim_{n\to \infty} \beta_n\sup_{0\le u<v\le u+h_n \le 1}|K(u, n)
- K (v,n ) |=1 \quad \rm a.s.
\]
Hence the maximum of the right-hand   side of~\eqref{Kmod} over $k=1,\ldots,n$ will be a.s.\ bounded by $n^{1/3}$ for all sufficiently large~$n$. Therefore replacing in~\eqref{C1_rep} the term $ n^{-1/2}K  (\widehat{\mu}_j +\theta_{\widehat{\mu}_j,N}, k  )$  with $ n^{-1/2}K  (\widehat{\mu}_j   , k  )$ will introduce an error that will uniformly be $o(1)$ a.s.

It remains to notice that
\begin{align}
\label{Self_sim}
\{  n^{-1/2}K(y, n t): y\in [0,1], t\ge 0\}\deq \{K(y, t): y\in [0,1],
 t\ge 0\}, \quad n\ge 1 ,
\end{align}
(which is obvious from~\eqref{Kiefer_cov}) and that
\[
\{(K(\widehat\mu_1, t),\ldots , K(\widehat\mu_d, t)):   t\ge 0\} \deq \{\bs{W}^{\bs{\al}} (t) : t\ge 0\}
\]
since  both processes are continuous zero-mean Gaussian with a common covariance structure. That the claimed approximation holds for points $t$ that are not multiples of $1/n$ and is uniform follows from the continuity of the approximating processes.

\smallskip

(ii)~Here we assume  that $n/N\to \nu\in \R_{>0}.$ The only difference in the proof from part~(i) is what happens to the first term on the right-hand side of~\eqref{C1_rep}. For $k= \lfloor nt \rfloor,$ it is now equal to  $(1+o(1))t\nu^{1/2} W^{0,N} (\widehat\mu_j)$, where clearly  $\bigl(W^{0,N} (\widehat\mu_1), \ldots, W^{0,N} (\widehat\mu_d)\bigr)\deq \bs{Z}^{\bs{\al}}.$ Finally, we have to recall that the processes $W^{0,N}$ and $K$ were independent of each other, which implies that $\bs{Z}^{\bs{\al}}$ and $\bs{W}^{\bs \al}$ in our approximation are also independent.

\smallskip

(iii) Applying the scaling used in the case when $N=o(n),$ \eqref{C1_rep} turns into
\begin{align*}
N^{ 1/2}n^{-1 }  (N_j (k) - (N+ k)\widehat{\mu}_j)
& =
\frac{k}n W^{0,N} (\widehat\mu_j)
+R_j \frac{k}n N^{-1/2}\ln N
\notag
\\
& +
\Bigl(\frac{N}n\Bigr)^{1/2}n^{-1/2}K \bigl(\widehat{\mu}_j +\theta_N  , k \bigr)
+ R_j^*\Bigl(\frac{N}n\Bigr)^{1/2} \frac{\ln^2 k}{n^{1/2}}.
\end{align*}
For $k=\lfloor nt\rfloor$, the first term on the right hand side is $(1+o(1) )t  W^{0,N} (\widehat\mu_j),$ yielding the approximating term $t\bs{Z}^{\bs{\al}}$.  The second and forth terms are vanishing a.s.\ due to the bounds~\eqref{R1_error} and~\eqref{R1*_error}. Finally, it follows from~\eqref{Self_sim} and~\eqref{BC_bound} that the third   term on the right hand side is~$o_P(1)$. The corollary is proved.
\end{proof}

\begin{proof}[Proof of Theorem~\ref{T2}]
Using notation~\eqref{xi-forma_0}, we will start with re-writing  representation~\eqref{Nj}  as
\begin{align}
\label{xi-form}
\bs{\Xi}(n) 
  = n^{1/2} (\bs{V}^\#   - \bs{\mu}^\#)  +\bs{Y} (\bs{V},n),
\end{align}
where, recalling notation~\eqref{g}, we set
\[
\bs{Y}(\bs{v},n):= n^{-1/2}\sum_{k=1}^n \bs{\gamma}(\bs{v},  k) ,
\quad
\bs{\gamma}(\bs{v},  k) := \bs{g}^\# (\bs{v}, U_k) -   \bs{v}^\#,
\quad
\bs{v}\in \Delta^{d}, \quad k\ge 1.
\]
As one could expect from Corollary~\ref{C1}, it will turn out  that the two terms on the right-hand side of~\eqref{xi-form} are asymptotically independent and normal, with a common   correlation matrix.

It follows from~\eqref{xi-form} that, for any $A\in \mathscr{C}^d  $,
\begin{align}
\label{p_int}
\pr \bigl(\bs{\Xi}(n)\in A\bigr)
 = \int_{\Delta^d}
  \pr \bigl(  \bs{Y} (\bs{v},n) \in A
   -  n^{1/2} (\bs{v}^\#   - \bs{\mu}^\#) \bigr)\pr (\bs{V}   \in d\bs{v} ).
\end{align}
The rest of the proof will consist of the following steps. First we will show that, up to a uniform additive error term $O(n^{-1/2}+N^{-1/2}),$  the integrand here equals
\[
\Phi_{\Sigma_{\bs \mu^{\scriptscriptstyle\#}}} ( A -  n^{1/2} (\bs{v}^\#   - \bs{\mu}^\#))
\]
(recall that $\Sigma_{\bs x} $  was defined in~\eqref{Sigma_x}). With the integrand replaced by this expression, the integral  on the right-hand side of~\eqref{p_int} is just the value on the set~$A$ of the convolution of $\Phi_{\Sigma_{\bs \mu^{\scriptscriptstyle\#}}}$ with the distribution of~$\bs{V}^\#-\bs{\mu}^\#$. To complete the argument, we will apply a known result on the  convergence rate in total variation of the joint distribution of sample quantiles (which coincides with the distribution of $\widehat{\bs{V}}^\#$) to the respective  Gaussian law.

It is easily seen that, for any $\bs{v}\in \Delta^d,$ the random vectors $\bs{\gamma} (\bs{v} ,k),$ $k\ge 1,$ are i.i.d.,
$
\exn \bs{\gamma} (\bs{v} ,1)=\bs{0},$ $\Co (\bs{\gamma}  (\bs{v} ,1))=\Sigma_{\bs  v^{\scriptscriptstyle\#}}. $ Therefore,  from  the Berry--Esseen type multivariate bound (see, e.g., relation~(1) in~\cite{Sa71}) we conclude that, for some constant~$c<\infty$ that depends on~$d$ only, one has
\begin{align}
 \label{BE_bound}
 \sup_{A_0\in\mathscr{C}^{d}  }
 \bigl|\pr \bigl(  \bs{Y} (\bs{v},n) \in A_0 \bigr)
 & - \Phi_{\Sigma_{\bs{v}^{\scriptscriptstyle  \#}}} (A_0 )\bigr|
   \le c n^{-1/2}\exn|\bs{\gamma} (\bs{v} ,1)
 \Sigma^{-1}_{\bs{v}^{\scriptscriptstyle  \#}}
 \bs{\gamma}^\top (\bs{v} ,1)|^{3/2}
 \notag \\
  & \le c n^{-1/2}\|\Sigma^{-1}_{\bs{v}^{\scriptscriptstyle  \#}}\| \exn\|\bs{\gamma} (\bs{v} ,1)\|^3
  \le  c n^{-1/2}\|\Sigma^{-1}_{\bs{v}^{\scriptscriptstyle  \#}}\|
\end{align}
since clearly $\|\bs{\gamma} (\bs{v} ,1)\| \le 2.$ One can easily verify by a direct computation that, setting $\bs{l}:=(1,\ldots, 1)\in \R^d,$ one has
\begin{align}
\label{Sigma-1}
\Sigma^{-1}_{\bs{u} }= \mbox{diag}(\bs{u}^{-1}) +\frac{\bs{l}^\top \bs{l}}{1-\widehat{\bs{u}}_d},\quad \bs{u}\in \Delta^{d\#}.
\end{align}
So for the matrix operator norm in~\eqref{BE_bound} we get
\begin{align}
\label{MOp_norm}
\|\Sigma^{-1}_{\bs{v}^{\scriptscriptstyle  \#}}\|
 \le
 \max_{1\le j\le d } v_j^{-1} + dv_{d+1}^{-1}
 <
  d\sum_{j=1}^{d+1} v_j^{-1}.
\end{align}

Now we will bound the difference between $\Phi_{\Sigma_{\bs{v}^{\scriptscriptstyle  \#}}}  (A_0 )$ and $\Phi_{\Sigma_{\bs{\mu}^{\scriptscriptstyle  \#}}}  (A_0 )$. For random vectors $\bs{Y}'\sim P', \bs{Y}''\sim P''$ taking values in $\R^k,$ $k\ge 1,$ denote  by
\begin{align}
\label{d_TV}
d_{TV}(\bs{Y}',\bs{Y}'')
= d_{TV}(P',P'')
:  =
\sup_B |P'( B)-P''( B)|
 \equiv  \sup_g |\exn g (\bs{Y}' ) - \exn g (\bs{Y}'')|
\end{align}
the total variation distance between the distributions of these   vectors (the first supremum is taken over all Borel~$B\subset \R^k$, the second one over all measurable functions $g:\R^k\to [0,1];$ see, e.g., Chapter~3 in~\cite{RaKlStFi13}). For a matrix $\Ac= (a_{ij}: 1\le i,j\le k)\in\R^{k\times k},$ denote by $\|\Ac\|_F:=\bigl( \sum_{i,j=1}^k a_{ij}^2\bigr)^{1/2}$ its Frobenius norm. By Theorem~1.1 in~\cite{DeMeRe18},
\begin{align}
\label{d_TV_Phi}
d_{TV} (\Phi_{\Sigma_{\bs{v}^{\scriptscriptstyle \#}} }, \Phi_{\Sigma_{\bs{\mu}^{\scriptscriptstyle \#}} } )
 \le
 2 \|\Sigma_{\bs{\mu}^{\scriptscriptstyle \#}} ^{-1}\Sigma_{\bs{v}^{\scriptscriptstyle \#}} -\Ic_d\|_F
  \le
    2 \|\Sigma_{\bs{\mu}^{\scriptscriptstyle \#}} ^{-1}\|_F\cdot \|\Sigma_{\bs{v}^{\scriptscriptstyle \#}} - \Sigma_{\bs{\mu}^{\scriptscriptstyle \#}} \|_F
\end{align}
as $\|\cdot \|_F$ is sub-multiplicative. From~\eqref{Sigma-1} one clearly has
\[
\|\Sigma^{-1}_{\bs{\mu}^{\scriptscriptstyle \#}} \|_F^2
\le
2 \sum_{j=1}^d \mu_j^{-2} + 2 d^2 \mu_{d+1}^{-2}
 <
2d(d+1)\delta^{-2}
\]
under the assumptions of Theorem~\ref{T2}.

The last factor in~\eqref{d_TV_Phi} does not exceed
\begin{align*}
\|\mbox{diag} (\bs{v}^{\scriptscriptstyle \#})
 - \mbox{diag} (\bs{\mu}^{\scriptscriptstyle \#}) \|_F
 +
 \|  (\bs{v}^{\scriptscriptstyle \#})^\top \bs{v}^{\scriptscriptstyle \#}
 - (\bs{\mu}^{\scriptscriptstyle \#})^\top \bs{\mu}^{\scriptscriptstyle \#} \|_F,
\end{align*}
where the square of first term is $\sum_{j=1}^d (v_j-\mu_j)^2 $, whereas that of the second one is
\begin{align*}
  \sum_{i,j=1}^d (v_i v_j-\mu_i \mu_j)^2
   \le
   2\sum_{i,j=1}^d  v_i^2( v_j- \mu_j)^2
   +  2\sum_{i,j=1}^d  \mu_j^2( v_i- \mu_i)^2
    \le 4 \sum_{j=1}^d (v_j-\mu_j)^2
\end{align*}
Thus we have got from~\eqref{d_TV_Phi} the bound
\begin{align*}
d_{TV} (\Phi_{\Sigma_{\bs{v}^{\scriptscriptstyle \#}} }, \Phi_{\Sigma_{\bs{\mu}^{\scriptscriptstyle \#}} } )
 \le c\|\bs{v}-\bs{\mu}\|.
\end{align*}

Combining this with \eqref{BE_bound} and~\eqref{MOp_norm}  yields
\begin{align}
\label{BE_bound1}
\sup_{A_0\in \mathscr{C}^{d}  }
\bigl|\pr \bigl(  \bs{Y} (\bs{v},n) \in A_0 \bigr) - \Phi_{\Sigma_{\bs{\mu}^{\scriptscriptstyle \#}} }(A_0 )\bigr|
\le c R_n  (\bs{v},\bs{\mu}),
\end{align}
where
\[
 R_n  (\bs{v},\bs{\mu}):=
  n^{-1/2} \sum_{j=1}^{d+1}  v_j^{-1 } +  \|\bs{v} -\bs{\mu} \|.
\]
To use this bound in~\eqref{p_int}, we need to compute the expectation of~$ R_n  (\bs{V},\bs{\mu}).$ First note that since  $\bs{V}\sim  \mbox{Dir}_{d+1}(\bs{\al})$, the   components  $V_j$ of this vector are  beta-distributed with respective parameters $( \al_j, N - \al_j),$ $j=1,\ldots, d+1$. Hence
\begin{align*}
\exn  V_j^{-1 }
  = \frac{\Be( \al_j-1, N - \al_j )}{\Be(\al_j, N - \al_j)}
  =  \frac{\Gamma(\alpha_j-1)\Gamma(N )}{\Gamma(N -1)\Gamma(\alpha_j )}
  =\frac{N-1}{\alpha_j-1}< \frac2{\delta}
\end{align*}
as we can assume without loss of generality that $\alpha_j\ge 2.$ As the variance of the beta distribution with parameters $a, b>0$ equals $ab (a+b)^{-2}(a+b+1)^{-1}$, we get
\begin{align*}
\bigl(\exn \|\bs{V} -\bs{\mu} \|\bigr)^2
& \le   \exn \|\bs{V} -\bs{\mu}   \|^2
=
  \sum_{j=1}^{d+1}\exn \bigl(  V_j  -\mu_j \bigr)^2
 \notag
 \\
 &
  =
  \sum_{j=1}^{d+1}\Va (  V_j)
  =
  \frac1{N+1}\sum_{j=1}^{d+1}\mu_j (1-\mu_j)
  \le  \frac{d+1}{4(N+1)}.
\end{align*}
We conclude that
\begin{align}
\label{ER_4}
\exn R_n (\bs{V},\bs{\mu}) \le c(n^{-1/2}+ N^{-1/2}) .
\end{align}
Now we obtain  from~\eqref{p_int},  \eqref{BE_bound1} and~\eqref{ER_4}  that, for $A\in\mathscr{C}^d ,$
\begin{align}
\label{upper_b}
\bigl|\pr  (\bs{\Xi}(n)\in A )
-
 \exn 
\Phi_{\Sigma_{\bs{\mu}^{\scriptscriptstyle \#}} }(A
-  n^{1/2} (\bs{v}^\#   - \bs{\mu}^\#))   \bigr|
\le  c(n^{-1/2}+ N^{-1/2}).
\end{align}

It follows from~\eqref{order_Dir} (with $k=d+1,$ $m=N$) and Theorem~3.1 in~\cite{Ma75} that, under the assumptions of our Theorem~\ref{T2}, for a random vector  $\bs{\zeta}\sim \Phi_{\Sigma_{\bs{\mu}^{\scriptscriptstyle \#}}},$ one has
\[
d_{TV} (\bs{V}^\#-\bs{\mu}^\#, N^{-1/2}\bs{\zeta}) \le c N^{-1/2}.
\]
Now from~\eqref{upper_b} and the last  relation in~\eqref{d_TV} we see that
\begin{align*}
\label{upper_b}
\bigl|
\pr  (\bs{\Xi}(n)\in A )
-
\exn  \Phi_{\Sigma_{\bs{\mu}^{\scriptscriptstyle \#}} }
\bigl(A -   n^{1/2}  N^{-1/2}   \bs{\zeta}  \bigr) \bigr|
 \le c(n^{-1/2}+ N^{-1/2}).
\end{align*}
As clearly $\exn \Phi_{\Sigma_{\bs{\mu}^{\scriptscriptstyle \#}} }
\bigl(A -   n^{1/2}  N^{-1/2}   \bs{\zeta}  \bigr) = \Phi_{(1 +n/N)\Sigma_{\bs{\mu}^{\scriptscriptstyle \#}}}
\bigl(A), $  Theorem~\ref{T2} is proved in view of Remark~\ref{Rem_1}.
\end{proof}
	



\begin{thebibliography}{99}
 


\bibitem{AtKa68}
{\sc   Athreya, K.B., and Ney, P.E.}  (1968).
Embedding of urn schemes into continuous
time Markov branching processes and related limit theorems.
{\em Ann.\ Math.\ Statist.} 39, 1801--1817.

\bibitem{AtNe72}
{\sc  Athreya, K.B., and Ney, P.E.} (1972).
{\em Branching Processes.} Springer, New York.


\bibitem{BaRa98}
{\sc  Balakrishnan, N., and Rao, C.R.}   (1998).
Order statistics: An introduction.
In: N.~Balakrishnan, C.R.Rao (eds), {\em Handbook of Statistics.} V.~16. Elsevier Science, Amsterdam, 3--24.

\bibitem{Bi99}
{\sc Billingsley, P.} (1999).
{\em Convergence of Probability Measures.} 2nd edn.
Wiley, New York.

\bibitem{BiLaSi08}
{\sc Binder, B.J., Landman, K.A., and Simpson, M.J.}  (2008)
Modeling proliferative tissue growth: A general approach and an avian case study.
{\em Phis.\ Review E}, 78,   031912.

\bibitem{BiLa09}
{\sc Binder, B.J., and  Landman, K.A.}  (2009).
Exclusion processes on a growing domain.
{\em J.~Theoret.\ Biol.} 259, 541--551.

\bibitem{BlMa73}
{\sc  Blackwell, D., and MacQueen, J.B.}  (1973).
Ferguson distributions via P\'olya urn schemes.
{\em Ann.\ Stat.} 1:2,   353--355.

\bibitem{BlHo91}
{\sc  Blom, G., and Holst, L.}  (1991).
Embedding procedures for discrete problems in probability.
{\em  Math.\ Scientist}, 16,  29--40.


\bibitem{CsRe78}
{\sc Cs\H{o}rg\"o, M., and R\'ev\'esz, P.} (1978).
Strong approximations of the quantile process. {\em Ann.\ Statist.} 6:4,   882--894.


\bibitem{CsRe81}
{\sc Cs\H{o}rg\"o, M., and R\'ev\'esz, P.} (1981).
{\em Strong Approximations in Probability and Statistics. }
Academic Press, New York.

\bibitem{de06}
{\sc de La Fortelle, A.}  (2006).
Yule process sample path asymptotics.
{\em Electron.\ Commun.\ Probab.} 11,  193--199.

\bibitem{DeMeRe18}
{\sc Devroye, L., Mehrabian, A., and Reddad, T.} (2018).
The total variation distance between high-dimensional Gaussians.
https://arxiv.org/abs/1810.08693.

\bibitem{EgPo23}
{\sc  Eggenberger, F., and P\'olya, G.}  (1923).
\"Uber die Statistik verketteter Vorgange.
{\em Z.\ Angew.\ Math.\ Mech.} 3, 279--289.

\bibitem{GoRe13}
{\sc  Goldstein, L., and Reinert, G.}  (2013).
Stein's method for the Beta distribution and the
P\'olya--Eggenberger Urn.
{\em J.~App.\ Prob.} 50:4, 1187-1205.


\bibitem{HeSa55}
{\sc Hewitt, E. and Savage, L. J.} (1955).
Symmetric measures on Cartesian products.
{\em Trans.\ Amer.\ Math.\ Soc.} 80, 470--501.



\bibitem{IkMa72}
{\sc  Ikeda, S., and Matsunawa, T.}  (1972).
On the uniform asymptotic normality of sample quantiles. 	
{\em Ann.\ Inst.\ Stat.\ Math.} 24, 33--52.


\bibitem{Ja04}
{\sc Janson, S.}  (2004).
Functional limit theorems for multitype branching processes and generalized P\'olya urns.
{\em Stoch.\  Proc.\ Appl.} 110:2, 177--245.

\bibitem{Ja06}
{\sc  Janson, S.} (2006).
Limit theorems for triangular urn schemes.
{\em Probab.\ Theory Related Fields,} 134:3,  417--452.

\bibitem{JoKo77}
{\sc Johnson, N.L., and Kotz, S.}  (1977).
{\em Urn Models and Their Applications.}
New York,   Springer.

 

\bibitem{KoMaTu75}
{\sc  Komlos, J., Major, P., and Tusnady, G.}  (1975). An approximation of partial sums of independent RVs and the sample DF. I.
{\em Z.~Wahrscheinlichkeitstheor.\ verw.\ Geb.} 32:1--2,  111--131.

\bibitem{KoBa97}
{\sc Kotz S., and Balakrishnan N.} (1997).
Advances in urn models during the past two decades. In: Balakrishnan N. (ed.), {\em  Advances in Combinatorial Methods and Applications to Probability and Statistics.} Birkh\"auser, Boston, 203--257.

\bibitem{Ma09}
{\sc Mahmoud, H.M.}  (2009).
{\em P\'olya Urn Models.} CRC Press, Boca Raton.

\bibitem{Ma06}
{\sc Markov, A.A.} (1906).
Extension of the law of large numbers to quantities depending on each other.
{\em Izv. fizm.-mat.\ obsch. Kazanskom univ.} 2:15, 135--156. (In Russian.) [Reprinted in:
{\em J.~Electron.\ Hist.\ Probab.\ Stat.}  2:1b (2006),
 Article 10,   http://eudml.org/doc/128778.]

\bibitem{Ma75}
{\sc Matsunawa, T.}  (1975).
On the error evaluation of the joint normal approximation for sample quantiles.
{\em Ann.\ Inst.\ Statist.\ Math.} 27:2, 189--199.

\bibitem{RaKlStFi13}
{\sc Rachev, S.T., Klebanov, L.B.,Stoyanov,  S.V., and Fabozzi, F.} (2013).
{\em The Methods of Distances in the Theory of Probability and Statistics.}
Springer, New York.


\bibitem{Sa71}
{\sc Sazonov,  V.V.} (1975).
On a bound for the rate of convergence in the multidimensional central limit theorem.
In: {\em   Proc.\ Sixth Berkeley Symp.\ on Math.\ Stat.\ and Prob.,} Berkeley and Los Angeles, University of California Press. Vol.~2,   563--582.

 



\bibitem{Wa68}
{\sc Walker, A.M.}  (1968).
A note on the asymptotic distribution of sample quantiles.
{\em J.~R.\ Stat.\ Soc.\ Ser.~B,} 30:3, 570--575.

	\end{thebibliography}
\end{document}